\newcommand{\Z}{\mathbb{Z}}
\newcommand{\cc}{\mathfrak{c}}
\newcommand{\A}{\mathfrak{A}}
\newcommand{\B}{\mathcal{B}}
\newcommand{\OO}{\mathcal{O}}
\newcommand{\Q}{\mathbb{Q}}
\newcommand{\F}{\mathcal{F}}
\renewcommand{\SS}{\mathbb{S}}
\newcommand{\M}{\mathcal{M}}
\newcommand{\T}{\mathcal{T}}
\renewcommand{\a}{\mathfrak{a}}
\renewcommand{\b}{\mathfrak{b}}
\newcommand{\id}{\mathrm{id}}
\newcommand{\ra}{\rightarrow}
\DeclareMathOperator{\ch}{char}
\DeclareMathOperator{\Gal}{Gal}
\newtheorem{theorem}{Theorem}
\newtheorem{prop}[theorem]{Proposition}
\newtheorem{cor}[theorem]{Corollary}
\theoremstyle{definition}
\newtheorem{remark}[theorem]{Remark}
\newtheorem{definition}[theorem]{Definition}
\numberwithin{equation}{section}
\numberwithin{theorem}{section}
\title{Galois scaffolds and semistable extensions}
\author{Kevin Keating \\
Department of Mathematics \\
University of Florida \\
Gainesville, FL 32611 \\
USA \\[.2cm]
{\tt keating@ufl.edu}}
\begin{document}

\maketitle

\begin{abstract}
Let $K$ be a local field and let $L/K$ be a totally
ramified Galois extension of degree $p^n$.  Being
semistable \cite{bon2} and possessing a Galois scaffold
\cite{bce} are two conditions which facilitate the
computation of the additive Galois module structure of
$L/K$.  In this note we show that $L/K$ is semistable if
and only if $L/K$ has a Galois scaffold.  We also give
sufficient conditions in terms of Galois scaffolds for
the extension $L/K$ to be stable.
\end{abstract}

\section{Introduction}

Let $K$ be a local field whose residue field is
perfect with characteristic $p$.  Let $L/K$ be a finite
Galois extension and set $G=\Gal(L/K)$.  Then $L$ is
free of rank 1 over $K[G]$ by the normal basis theorem.
Let $\OO_K$, $\OO_L$ be the integer rings of $K$, $L$,
and let $\M_K$, $\M_L$ be the maximal ideals of these
rings.  For a fractional ideal $\M_L^h$ of $L$ define
the associated order
\[\A_h=\{\gamma\in K[G]:\gamma(\M_L^h)\subset\M_L^h\}.\]
of $\M_L^h$ in $K[G]$.  Suppose $\M_L^h$ is free over
some order $\B$ of $K[G]$.  Then $\B=\A_h$, and $\M_L^h$
is free of rank 1 over $\A_h$.  If $L/K$ is at most
tamely ramified then $\A_h=\OO_K[G]$ and $\M_L^h$ is
free of rank 1 over $\OO_K[G]$ for all $h\in\Z$
\cite[Th.\,1]{ullom}.

     On the other hand, if $L/K$ is wildly ramified then
in most cases $\A_h$ properly contains $\OO_K[G]$, and
hence $\M_L^h$ is not free over $\OO_K[G]$.  The problem
of determining when $\M_L^h$ is free over $\A_h$ for a
totally ramified extension $L/K$ of degree $p^n$ appears
to be quite difficult.  Two classes of extensions for
which a partial answer can be obtained are semistable
extensions \cite{bon2} and extensions with a Galois
scaffold \cite{bce}.  In this note we consider the
relation between semistable extensions and Galois
scaffolds.  In particular, we show that $L/K$ admits
a Galois scaffold if and only if $L/K$ is semistable.

\section{Galois Scaffolds}

In this section we give the definition of a Galois
scaffold.  We also give sufficient conditions for an
extension to admit a Galois scaffold.  This criterion
will be used in section~\ref{semscaf}.

     Let $L/K$ be a totally ramified Galois extension of
degree $p^n$ and set $G=\Gal(L/K)$.  Let
$b_1\le b_2\le\cdots\le b_n$ be the lower ramification
breaks of $L/K$, counted with multiplicity.  Assume that
$p\nmid b_i$ for $1\le i\le n$.  Set
$\SS_{p^n}=\{0,1,\ldots,p^n-1\}$ and write $s\in\SS$ in
base $p$ as
\[s=s_{(0)}p^0+s_{(1)}p^1+\cdots+s_{(n-1)}p^{n-1}.\]
Define a partial order on $\SS_{p^n}$ by $s\preceq t$ if
$s_{(i)}\le t_{(i)}$ for $0\le i\le n-1$.  Then by
Lucas's theorem we have $s\preceq t$ if and only if
$p\nmid\binom{t}{s}$.  Define $\b:\SS_{p^n}\ra\Z$ by
\[\b(s)=s_{(0)}p^0b_n+s_{(1)}p^1b_{n-1}+
\cdots+s_{(n-1)}p^{n-1}b_1.\]
Let $r:\Z\ra\SS_{p^n}$ be the function which maps
$a\in\Z$ onto its least nonnegative residue modulo
$p^n$.  The function $r\circ(-\b):\SS_{p^n}\ra\SS_{p^n}$
is a bijection since $p\nmid b_i$.  Therefore we may
define $\a:\SS_{p^n}\ra\SS_{p^n}$ to be the inverse of
$r\circ(-\b)$.  Let $v_L:L\ra\Z\cup\{\infty\}$ denote
the normalized valuation on $L$.

\begin{definition}[\cite{bce}, Definition 2.6]
\label{scaffold}
Let $\cc\ge1$.  A Galois scaffold
$(\{\Psi_i\},\{\lambda_t\})$ for $L/K$ with precision
$\cc$ consists of elements $\Psi_i\in K[G]$ for
$1\le i\le n$ and $\lambda_t\in L$ for all $t\in\Z$ such
that the following hold:
\begin{enumerate}[(i)]
\item $v_L(\lambda_t)=t$ for all $t\in\Z$.
\item $\lambda_{t_1}\lambda_{t_2}^{-1}\in K$
whenever $t_1\equiv t_2\pmod{p^n}$.
\item $\Psi_i(1)=0$ for $1\le i\le n$.
\item For $1\le i\le n$ and $t\in\Z$ there exists
$u_{it}\in\OO_K^{\times}$ such that the following
congruence modulo $\lambda_{t+p^{n-i}b_i}\M_L^{\cc}$
holds:
\[\Psi_i(\lambda_t)\equiv\begin{cases} 
u_{it}\lambda_{t+p^{n-i}b_i}&
\mbox{if }\a(r(t))_{(n-i)}\ge1, \\ 
0&\mbox{if }\a(r(t))_{(n-i)}=0.
\end{cases} \]
\end{enumerate}
A Galois scaffold for $L/K$ with infinite precision
consists of the above data with the congruence in (iv)
replaced by equality.
\end{definition}

     In \cite[Th.\,3.1]{bce} sufficient conditions are
given for an ideal $\M_L^h$ in an extension $L/K$ with a
Galois scaffold to be free over its associated order
$\A_h$.  When the precision of the scaffold is
sufficiently large these sufficient conditions are shown
to be necessary as well.

     Let $(\{\Psi_i\},\{\lambda_t\})$ be a Galois
scaffold for $L/K$.  For $s\in\SS_{p^n}$ set
\[\Psi^{(s)}=\Psi_n^{s_{(0)}}\Psi_{n-1}^{s_{(1)}}
\ldots\Psi_2^{s_{(n-2)}}\Psi_1^{s_{(n-1)}}.\]
As explained on page 974 of \cite{bce}, it follows from
the definition of a Galois scaffold that for every
$t\in\Z$ there is $U_{st}\in\OO_K^{\times}$ such that
the following holds modulo
$\lambda_{t+\b(s)}\M_L^{\cc}$:
\begin{equation} \label{Psis}
\Psi^{(s)}(\lambda_t)\equiv\begin{cases} 
U_{st}\lambda_{t+\b(s)}&
\mbox{if }s\preceq\a(r(t)), \\ 
0&\mbox{if }s\not\preceq\a(r(t)).
\end{cases}
\end{equation}

\begin{prop} \label{scaf1}
Let $\Phi_1,\ldots,\Phi_n\in K[G]$ and let
$\{\lambda_t:t\in\Z\}$ be a subset of $L$ satisfying
conditions (i) and (ii) in Definition~\ref{scaffold}.
Suppose that for all $t\in\Z$ and $1\le i\le n$ we have
\begin{alignat}{2} \label{hyp1}
v_L(\Phi_i(\lambda_t))&=t+p^{n-i}b_i
&&\text{ if }\a(r(t))_{(n-i)}\ge1, \\
v_L(\Phi_i(\lambda_t))&>t+p^{n-i}b_i
&&\text{ if }\a(r(t))_{(n-i)}=0. \label{hyp2}
\end{alignat}
Then there are $\Psi_1,\ldots,\Psi_n\in K[G]$ such that
$(\{\Psi_i\},\{\lambda_t\})$ is a Galois scaffold for
$L/K$ with precision $\cc=1$.
\end{prop}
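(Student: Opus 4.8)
The plan is to note that conditions (i) and (ii) of Definition~\ref{scaffold} are already part of the hypothesis, and that (\ref{hyp1}) and (\ref{hyp2}) are precisely the valuation content of condition (iv) with $\cc=1$ for the operators $\Phi_i$ themselves; the only axiom that can fail is (iii). Since $G$ fixes $1\in L$, any $\gamma=\sum_{\sigma}a_{\sigma}\sigma\in K[G]$ satisfies $\gamma(1)=\bigl(\sum_\sigma a_\sigma\bigr)\cdot 1\in K$, so in particular $\Phi_i(1)\in K$. I would therefore set
\[\Psi_i=\Phi_i-\Phi_i(1)\cdot\id,\]
which lies in $K[G]$ and satisfies $\Psi_i(1)=\Phi_i(1)-\Phi_i(1)=0$, giving (iii) for free. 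It then remains only to check that subtracting the scalar $\Phi_i(1)$ does not disturb the valuation behaviour recorded in (\ref{hyp1}) and (\ref{hyp2}).

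The crux is the estimate $v_L(\Phi_i(1))>p^{n-i}b_i$. To prove it, recall that because $L/K$ is totally ramified of degree $p^n$ the elements $\lambda_0,\dots,\lambda_{p^n-1}$ have valuations $0,1,\dots,p^n-1$ and hence form an $\OO_K$-basis of $\OO_L$; writing $1=\sum_{t=0}^{p^n-1}d_t\lambda_t$ with $d_t\in\OO_K$ gives $\Phi_i(1)=\sum_{t=0}^{p^n-1}d_t\,\Phi_i(\lambda_t)$. For $t\ge1$ we have $v_L(\Phi_i(\lambda_t))\ge t+p^{n-i}b_i>p^{n-i}b_i$ by (\ref{hyp1})--(\ref{hyp2}), while for $t=0$ one checks $\a(r(0))=\a(0)=0$ (since $\b(0)=0$), so the digit $\a(r(0))_{(n-i)}$ vanishes and (\ref{hyp2}) gives the strict bound $v_L(\Phi_i(\lambda_0))>p^{n-i}b_i$. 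Every summand therefore has valuation strictly above $p^{n-i}b_i$, and the estimate follows. I expect this strict inequality at $t=0$ to be the delicate point: it is exactly what upgrades the naive bound $v_L(\Phi_i(1))\ge p^{n-i}b_i$ to a strict one and so makes the scalar correction legitimate.

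Finally I would verify (iv) for $\Psi_i$ with $\cc=1$. For every $t\in\Z$ we have $v_L(\Phi_i(1)\lambda_t)=v_L(\Phi_i(1))+t>t+p^{n-i}b_i$ by the estimate above. When $\a(r(t))_{(n-i)}=0$, combining this with (\ref{hyp2}) gives $v_L(\Psi_i(\lambda_t))>t+p^{n-i}b_i$, as required. When $\a(r(t))_{(n-i)}\ge1$, the term $\Phi_i(\lambda_t)$ has valuation exactly $t+p^{n-i}b_i$ by (\ref{hyp1}), strictly below that of $\Phi_i(1)\lambda_t$, so $v_L(\Psi_i(\lambda_t))=t+p^{n-i}b_i$ and $\Psi_i(\lambda_t)$ has the same leading term as $\Phi_i(\lambda_t)$. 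To produce the required $u_{it}\in\OO_K^{\times}$ I would again use that $L/K$ is totally ramified, so $\kappa_L=\kappa_K$ and $\OO_K^{\times}\to\kappa_K^{\times}$ is surjective: choosing $u_{it}$ whose residue matches that of $\Psi_i(\lambda_t)/\lambda_{t+p^{n-i}b_i}\in\OO_L^{\times}$ yields $\Psi_i(\lambda_t)\equiv u_{it}\lambda_{t+p^{n-i}b_i}$ modulo $\lambda_{t+p^{n-i}b_i}\M_L$. This completes the construction; the only genuine input beyond bookkeeping is the valuation estimate of the second paragraph.
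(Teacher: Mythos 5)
Your proof is correct and takes essentially the same route as the paper: the identical correction $\Psi_i=\Phi_i-\Phi_i(1)\cdot\id$, hinging on the estimate $v_L(\Phi_i(1))>p^{n-i}b_i$. Your expansion of $1$ in the $\OO_K$-basis $\lambda_0,\ldots,\lambda_{p^n-1}$, together with the observation that $\a(r(0))=0$ forces the strict inequality at $t=0$, supplies exactly the justification for that estimate which the paper asserts without detail.
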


\begin{proof}
For $1\le i\le n$ set
$\Psi_i=\Phi_i-\Phi_i(1)\cdot\id_L$.  It follows from
(\ref{hyp1}) and (\ref{hyp2}) that
$v_L(\Phi_i(1))>p^{n-i}b_i$.  Hence for all $t\in\Z$ we
have
\[\Psi_i(\lambda_t)\equiv\Phi_i(\lambda_t)
\pmod{\lambda_{t+p^{n-i}b_i}\M_L},\]
so (\ref{hyp1}) and (\ref{hyp2}) hold with $\Phi_i$
replaced by $\Psi_i$.  If $\a(t)_{(n-i)}\ge1$ then
there is $u_{it}\in\OO_K^{\times}$ such that
\[\Psi_i(\lambda_t)\equiv u_{it}\lambda_{t+p^{n-i}b_i}
\pmod{\lambda_{t+p^{n-i}b_i}\M_L}.\]
Therefore $(\{\Psi_i\},\{\lambda_t\})$ is a Galois
scaffold for $L/K$ with precision 1.
\end{proof}

\section{Stable and semistable extensions}

In this section we outline Bondarko's theory of stable
and semistable extensions \cite{bon1,bon2}.  We also
present some interpretations and refinements of
Bondarko's work which will be useful in the next
section.

     Let $L/K$ be a totally ramified Galois extension of
degree $p^n$ and let $T=\sum_{\sigma\in G}\sigma$ denote
the trace element of $K[G]$.  Define
$\phi:L\otimes_KL\ra L[G]$ by
\[\phi(a\otimes b)=aTb
=\sum_{\sigma\in G}a\sigma(b)\sigma.\]
By Remark~1.2 of \cite{bon1}, $\phi$ is an
isomorphism of vector spaces over $K$.  The following is
proved in Proposition~1.6.1 of \cite{bon1}:

\begin{prop} \label{prod}
Let $\alpha,\beta\in L\otimes_KL$ and write
$\phi(\alpha)=\sum_{\sigma\in G}a_{\sigma}\sigma$ and
$\phi(\beta)=\sum_{\sigma\in G}b_{\sigma}\sigma$.  Then
$\phi(\alpha\beta)=\sum_{\sigma\in G}
a_{\sigma}b_{\sigma}\sigma$.
\end{prop}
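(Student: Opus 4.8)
The plan is to exploit $K$-bilinearity to reduce to simple tensors. Both of the expressions $\phi(\alpha\beta)$ and $\sum_{\sigma\in G}a_\sigma b_\sigma\sigma$ are $K$-bilinear functions of the pair $(\alpha,\beta)$: multiplication in $L\otimes_K L$ is $K$-bilinear and $\phi$ is $K$-linear (Remark~1.2 of \cite{bon1}), so $(\alpha,\beta)\mapsto\phi(\alpha\beta)$ is bilinear; and since $\phi$ is $K$-linear, each coefficient $a_\sigma$ depends $K$-linearly on $\alpha$ and each $b_\sigma$ depends $K$-linearly on $\beta$, so $(\alpha,\beta)\mapsto\sum_\sigma a_\sigma b_\sigma\sigma$ is bilinear as well. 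Because simple tensors span $L\otimes_K L$ over $K$, it therefore suffices to verify the claimed identity when $\alpha$ and $\beta$ are simple tensors.

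Next I would take $\alpha=a\otimes b$ and $\beta=c\otimes d$. From the definition $\phi(a\otimes b)=\sum_{\sigma\in G}a\sigma(b)\sigma$ one reads off $a_\sigma=a\sigma(b)$ and $b_\sigma=c\sigma(d)$, so the right-hand side of the asserted formula is $\sum_{\sigma\in G}ac\,\sigma(b)\sigma(d)\,\sigma$. On the other hand $\alpha\beta=ac\otimes bd$, whence $\phi(\alpha\beta)=\sum_{\sigma\in G}ac\,\sigma(bd)\,\sigma$. The two sums agree term by term because each $\sigma\in G$ is a field automorphism, so $\sigma(bd)=\sigma(b)\sigma(d)$. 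This completes the verification on simple tensors, and hence in general.

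The argument presents no genuine obstacle; the only point requiring care is the bilinearity reduction, namely confirming that both sides really are $K$-bilinear so that agreement on the spanning set of simple tensors is enough. Conceptually, the proposition records that $\phi$ carries the ring structure of $L\otimes_K L$ to the \emph{componentwise} product on $L[G]$, which is exactly the classical decomposition $L\otimes_K L\cong\prod_{\sigma\in G}L$ attached to a Galois extension; this product structure is precisely what will make $\phi$ a computational tool in the next section.
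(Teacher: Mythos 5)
Your proof is correct. The paper itself contains no argument for this proposition --- it simply cites Proposition~1.6.1 of \cite{bon1} --- so there is no in-paper proof to compare against; your bilinearity reduction to simple tensors, where the identity comes down to $(a\otimes b)(c\otimes d)=ac\otimes bd$ and the multiplicativity $\sigma(bd)=\sigma(b)\sigma(d)$, is exactly the standard argument behind that citation. The one step needing care, that both $(\alpha,\beta)\mapsto\phi(\alpha\beta)$ and $(\alpha,\beta)\mapsto\sum_{\sigma}a_{\sigma}b_{\sigma}\sigma$ are $K$-bilinear (the latter because each coefficient functional composed with the $K$-linear map $\phi$ is $K$-linear), you address explicitly; note also that only the linearity of $\phi$ is used, not the fact that it is an isomorphism.
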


     Let $H=\langle(p^n,-p^n)\rangle$ be the subgroup of
$\Z\times\Z$ generated by the element $(p^n,-p^n)$.
Then
\[\F=\{(a,b)\in\Z\times\Z:0\le b<p^n\}\]
is a set of coset representatives for $(\Z\times\Z)/H$.
For $(a,b)\in\Z\times\Z$ write $[a,b]$ for the coset
$(a,b)+H$.  We define a partial order on
$(\Z\times\Z)/H$ by $[a,b]\le[c,d]$ if and only if there
is $(c',d')\in[c,d]$ with $a\le c'$ and $b\le d'$.  We
easily see that $[a,b]\not\le[c,d]$ if and only if
$[c+1,d-p^n+1]\le[a,b]$ (see \cite[(16)]{bon2}).

     Fix a uniformizer $\pi_L$ for $L$ and let $\T$ be
the set of Teichm\"uller representatives for $K$.  Let
$\beta\in L\otimes_KL$.  Then there are uniquely
determined $c_j\in L$ and $a_{ij}\in\T$ such that
\begin{align*}
\beta&=\sum_{j=0}^{p^n-1}c_j\otimes\pi_L^j
=\sum_{(i,j)\in\F}a_{ij}\pi_L^i\otimes\pi_L^j.
\end{align*}
Set
\[R(\beta)=\{[i,j]\in(\Z\times\Z)/H:(i,j)\in\F,
\;a_{ij}\not=0\}.\]

\begin{definition}
Define the diagram of $\beta$ to be
\[D(\beta)=\{[a,b]\in(\Z\times\Z)/H:[i,j]\le[a,b]
\text{ for some }[i,j]\in R(\beta)\}.\]
\end{definition}

     It is observed in Remark~2.4.3 of \cite{bon2} that
while $R(\beta)$ can depend on the choice of uniformizer
$\pi_L$ for $L$, $D(\beta)$ does not depend on this
choice.  Let $G(\beta)$ denote the set of minimal
elements of $D(\beta)$ with respect to $\le$.  Then
$G(\beta)$ is also the set of minimal elements of
$R(\beta)$.

     Let $\delta_{L/K}=\M_L^d$ denote the different
of $L/K$ and set $i_0=d-p^n+1$.  Then $i_0$ is the 0th
index of inseparability of $L/K$
\cite[Prop.\,3.18]{heier}, and $\b(p^n-1)=i_0$
\cite[IV, \S1, Prop.\,4]{cl}.  The following is proved
in Proposition~2.4.2 of \cite{bon2}:

\begin{theorem} \label{shifts}
Let $\xi\in K[G]\smallsetminus\{0\}$, let
$\beta\in L\otimes_KL$ satisfy $\xi=\phi(\beta)$, and
let $a,b\in\Z$.  Then the following are equivalent:
\begin{enumerate}[(a)]
\item $[a,b]\in G(\beta)$.
\item For all $y\in L$ with $v_L(y)=-b-i_0$ we have
$v_L(\xi(y))=a$.
\end{enumerate}
\end{theorem}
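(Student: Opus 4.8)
The plan is to make the operator $\xi$ explicit and reduce everything to valuations of traces. Writing $\beta=\sum_{j=0}^{p^n-1}c_j\otimes\pi_L^j$ with $c_j=\sum_i a_{ij}\pi_L^i\in L$, the definition of $\phi$ gives, for every $y\in L$,
\[
\xi(y)=\phi(\beta)(y)=\sum_{j=0}^{p^n-1}c_j\,\Tr_{L/K}(\pi_L^j y),
\]
so that $v_L(c_j)=\min\{i:a_{ij}\neq0\}$ is the smallest first coordinate occurring at $j$ in $R(\beta)$. Fixing $b$ and taking $y$ with $v_L(y)=-b-i_0$, each summand has valuation $v_L(c_j)+v_L(\Tr_{L/K}(\pi_L^j y))$, so the whole problem reduces to controlling $v_L(\Tr_{L/K}(\pi_L^j y))$ as $j$ and $y$ vary.

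The key input is a trace lemma: for $z\in L^{\times}$ one always has $v_L(\Tr_{L/K}(z))\ge p^n\lceil(v_L(z)+i_0)/p^n\rceil$ (this follows from $\Tr_{L/K}(\M_L^m)=\M_K^{\lceil(m+i_0)/p^n\rceil}$, which comes from the description of the inverse different $\M_L^{-d}$ together with $i_0=d-p^n+1$), and equality holds whenever $v_L(z)\equiv-i_0\pmod{p^n}$. I would prove the sharp case by a residue argument: when $m=v_L(z)\equiv-i_0$ the additive map $u\mapsto\Tr_{L/K}(u\pi_L^m)$ sends $\M_L$ into $\M_K^{k+1}$ (where $k=(m+i_0)/p^n$) but $\OO_L$ onto $\M_K^k$, so it induces a nonzero $(\OO_K/\M_K)$-linear map $\OO_L/\M_L\to\M_K^k/\M_K^{k+1}$; since $L/K$ is totally ramified these residue fields coincide and are one-dimensional, so the map is injective and $v_L(\Tr_{L/K}(z))=v_L(z)+i_0$ for every such $z$. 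For $v_L(z)\not\equiv-i_0$ the same computation shows the induced map is not even well defined on the residue field, which is precisely the source of the nonconstancy exploited below.

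Next I translate the combinatorics of $G(\beta)$ into an arithmetic function. For fixed $b$ put $f_b(i,j)=i+p^n\lceil(j-b)/p^n\rceil$. A direct manipulation of the definition of $\le$ shows $[i,j]\le[a,b]\iff f_b(i,j)\le a$; consequently $[a,b]\in G(\beta)$ if and only if the representative of $[a,b]$ in $\F$ is the \emph{unique} minimizer of $f_b$ over $R(\beta)$, and this representative necessarily sits at $j\equiv b\pmod{p^n}$, i.e.\ at $j=b^{\ast}:=b\bmod p^n$. Now $v_L(c_j)+v_L(\Tr_{L/K}(\pi_L^j y))\ge f_b(v_L(c_j),j)$, and for $j=b^{\ast}$ the trace lemma forces equality with the constant value $A:=v_L(c_{b^{\ast}})+b^{\ast}-b$. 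This gives (a)$\Rightarrow$(b) at once: if $[a,b]\in G(\beta)$ then $a=A$ is the strict minimum, the $j=b^{\ast}$ term has valuation exactly $a$ for every $y$, and every other term has valuation $\ge f_b(v_L(c_j),j)>a$, so $v_L(\xi(y))=a$ for all $y$ with $v_L(y)=-b-i_0$.

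For the converse I argue the contrapositive. Since two cosets with the same second coordinate are comparable, $G(\beta)$ contains at most one element of the form $[a',b]$; if it contains one with $a'\neq a$ then (b) fails by the case already treated. The remaining, and hardest, case is that $G(\beta)$ contains no element with second coordinate $\equiv b$, where I must show that $y\mapsto v_L(\xi(y))$ is nonconstant on the set $v_L(y)=-b-i_0$. Here the $j=b^{\ast}$ contribution is still pinned at valuation $A$, but the minimum $a_0=\min_{R(\beta)}f_b$ is either attained only at some $j_1\not\equiv b$ (so $A>a_0$) or attained at $b^{\ast}$ together with such a $j_1$. In both situations the plan is to use the sub-fact from the trace lemma—that for $j\not\equiv b$ the leading part of $\Tr_{L/K}(\pi_L^j y)$ is not determined by the leading digit of $y$—to tune the deeper digits of $y$ so as to switch the coefficient of $\pi_L^{a_0}$ in $\xi(y)$ between zero and nonzero, producing two different valuations. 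Carrying this out rigorously, in particular verifying that the several level-$a_0$ contributions indexed by different $j$ can be arranged not to cancel identically (so that $a_0$ is actually achieved) and can also be made to cancel, is the main obstacle, and is where the one-dimensionality of the residue field and the precise dependence of trace leading terms on the digits of $y$ must be used with care.
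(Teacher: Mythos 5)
Your overall architecture is sound, and it is worth noting that the paper itself gives no proof of Theorem~\ref{shifts} at all --- it is quoted from Proposition~2.4.2 of \cite{bon2} --- so you are reconstructing Bondarko's argument from scratch rather than paralleling anything in this text. The pieces you actually prove are correct: $\xi(y)=\sum_j c_j\Tr_{L/K}(\pi_L^jy)$ is the right expansion; your trace lemma is true (one checks $\lfloor(m+d)/p^n\rfloor=\lceil(m+i_0)/p^n\rceil$, and your residue argument for sharpness when $v_L(z)\equiv-i_0\pmod{p^n}$ is valid because $\M_L^m/\M_L^{m+1}$ and $\M_K^k/\M_K^{k+1}$ are one-dimensional over the common residue field of the totally ramified extension); and the translation $[i,j]\le[a,b]\iff i+p^n\lceil(j-b)/p^n\rceil\le a$, hence the characterization of $[a,b]\in G(\beta)$ as ``the $\F$-representative of $[a,b]$ is the unique minimizer of $f_b$ on $R(\beta)$,'' checks out. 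This yields a complete proof of (a)$\Rightarrow$(b), and the easy half of the converse (when $G(\beta)$ contains an element with second coordinate $\equiv b\pmod{p^n}$) is also fine.

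The genuine gap is the one you flag yourself: in the remaining case of (b)$\Rightarrow$(a), where $G(\beta)$ has no element with second coordinate $\equiv b\pmod{p^n}$, you must actually exhibit two elements $y$ with $v_L(y)=-b-i_0$ and different $v_L(\xi(y))$, and your ``tune the deeper digits'' plan is left unexecuted; the cancellation problem you raise among the several columns attaining the level $a_0$ is precisely the content of the theorem in this case, so as written the converse is only half proved. The cleanest repair decouples the columns with the dual basis: let $\{\theta_j\}$ satisfy $\Tr_{L/K}(\pi_L^i\theta_j)=\delta_{ij}$; a computation with the Eisenstein polynomial of $\pi_L$ gives $v_L(\theta_j)=-j-i_0$. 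Pick a column $j_1\not\equiv b\pmod{p^n}$ attaining $a_0$ (one exists by your dichotomy, since a unique minimizer in column $b^{\ast}$ would put an element of $G(\beta)$ in that column) and set $k_1=\lceil(j_1-b)/p^n\rceil$; then for $c\in\OO_K$ the element $z=c\pi_K^{k_1}\theta_{j_1}$ satisfies $v_L(z)\ge-b-i_0+1$ precisely because $j_1\not\equiv b$ forces $p^nk_1-j_1+b\ge1$, while $\xi(z)=c\,c_{j_1}\pi_K^{k_1}$ sweeps out the entire line $\M_L^{a_0}/\M_L^{a_0+1}$ as $c$ varies. Hence for any $y_0$ on the sphere, either $v_L(\xi(y_0))>a_0$ and adding $z$ with $c\in\OO_K^{\times}$ produces the value $a_0$, or $v_L(\xi(y_0))=a_0$ and a suitable $c$ cancels the leading term and produces a value $>a_0$; either way $v_L(\xi(\cdot))$ is nonconstant on the sphere and (b) fails for every $a$. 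With this attainment-and-cancellation device inserted your argument closes; without it, the hard direction is missing its essential step.
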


     For $\xi\in K[G]$ with $\xi\not=0$ we define
$f_{\xi}:\Z\ra\Z$ by
\[f_{\xi}(a)=\min\{v_L(\xi(y)):y\in\M_L^a\}.\]
Then $f_{\xi}(a+1)\ge f_{\xi}(a)$.  Furthermore, for
every $a\in\Z$ there is $z\in L$ with $v_L(z)=a$ and
$v_L(\xi(z))=f_{\xi}(a)$.

\begin{cor} \label{fxi}
Let $\xi\in K[G]\smallsetminus\{0\}$, let
$\beta\in L\otimes_KL$ satisfy $\xi=\phi(\beta)$, and
let $a,b\in\Z$.  Then the following are equivalent:
\begin{enumerate}[(a)]
\item $[a,b]\in G(\beta)$.
\item $f_{\xi}(-b-i_0)=a$ and $f_{\xi}(-b-i_0+1)>a$.
\end{enumerate}
\end{cor}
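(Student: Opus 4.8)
Write $c=-b-i_0$. By Theorem~\ref{shifts} applied to $\beta$, statement (a) is equivalent to the homogeneity assertion that $v_L(\xi(y))=a$ for every $y\in L$ with $v_L(y)=c$; call this assertion (H). The plan is to prove that (H) is equivalent to (b), after which the corollary follows. Throughout I will use one elementary device. Since $\xi\in K[G]$ and $\Gal(L/K)$ fixes $K$, the map $\xi\colon L\to L$ is $K$-linear, so $\xi(y+tz)=\xi(y)+t\,\xi(z)$ for $t\in\OO_K$; and since $L/K$ is totally ramified, the residue field of $L$ coincides with that of $K$. Consequently, given $y,z\in L$ with $v_L(y)=v_L(z)=m$, I may write their leading coefficients as units of $\OO_L$, lift the reduction of the ratio of these coefficients to a unit $t\in\OO_K^{\times}$, and thereby arrange $v_L(y-tz)>m$. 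The same trick applied to $\xi(y)$ and $\xi(z)$ lets me cancel leading terms in the image whenever $\xi(y)$ and $\xi(z)$ have equal valuation.

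For the implication (b)$\Rightarrow$(H), I would assume $f_{\xi}(c)=a$ and $f_{\xi}(c+1)>a$, and let $y$ satisfy $v_L(y)=c$. Since $v_L(y)\ge c$ we already have $v_L(\xi(y))\ge f_{\xi}(c)=a$, so it suffices to rule out strict inequality. If $v_L(\xi(y))>a$, I use the attainment property stated before the corollary to choose $z$ with $v_L(z)=c$ and $v_L(\xi(z))=f_{\xi}(c)=a$, then cancel the leading terms of $y$ and $z$ to obtain $t\in\OO_K^{\times}$ with $v_L(y-tz)\ge c+1$. Now $\xi(y-tz)=\xi(y)-t\,\xi(z)$ has valuation exactly $a$, because $v_L(\xi(y))>a=v_L(t\,\xi(z))$; but $y-tz\in\M_L^{c+1}$ forces $v_L(\xi(y-tz))\ge f_{\xi}(c+1)>a$, a contradiction. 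Hence $v_L(\xi(y))=a$, which is (H).

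For the converse (H)$\Rightarrow$(b), I first show $f_{\xi}(c+1)>a$. Supposing instead $f_{\xi}(c+1)\le a$, pick $z$ with $v_L(z)=c+1$ and $v_L(\xi(z))=f_{\xi}(c+1)$, and pick any $w$ with $v_L(w)=c$, so $v_L(\xi(w))=a$ by (H). If $f_{\xi}(c+1)<a$, then $y=w+z$ has $v_L(y)=c$ while $v_L(\xi(y))=f_{\xi}(c+1)<a$; if $f_{\xi}(c+1)=a$, then cancelling the leading terms of $\xi(w)$ and $\xi(z)$ yields $t\in\OO_K^{\times}$ such that $y=w+tz$ has $v_L(y)=c$ but $v_L(\xi(y))>a$. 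Either way (H) is violated, so $f_{\xi}(c+1)>a$. Finally, in the minimum defining $f_{\xi}(c)$ the elements of valuation $c$ contribute exactly $a$ by (H), whereas those of valuation $\ge c+1$ contribute at least $f_{\xi}(c+1)>a$; hence $f_{\xi}(c)=a$, giving (b).

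The monotonicity and attainment properties of $f_{\xi}$ recalled before the corollary handle the bookkeeping, and Theorem~\ref{shifts} supplies the passage between (H) and $G(\beta)$, so these are not where the work lies. I expect the one genuinely delicate point to be the leading-term cancellation: because $\xi$ is only $K$-linear, I cannot cancel using arbitrary coefficients from $\OO_L$, and it is precisely the equality of residue fields arising from total ramification that permits every cancellation to be performed with a coefficient $t\in\OO_K^{\times}$. Getting this step stated cleanly, and checking that the resulting combination lands in the correct ideal $\M_L^{c+1}$, is the main thing to get right.
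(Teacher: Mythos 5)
Your proposal is correct and follows essentially the same route as the paper: both directions reduce, via Theorem~\ref{shifts}, to the homogeneity statement and are handled by leading-term cancellation with units $t\in\OO_K^{\times}$ (justified by equality of residue fields under total ramification) together with the attainment and monotonicity properties of $f_{\xi}$. Your only deviations are organizational --- you prove $f_{\xi}(c+1)>a$ before $f_{\xi}(c)=a$, which lets you dispose explicitly of the case $f_{\xi}(c+1)<a$ that the paper handles implicitly via monotonicity, and your closing step spelling out why higher-valuation elements cannot lower the minimum is actually slightly more complete than the paper's terse ``Therefore $f_{\xi}(-b-i_0)=a$.''
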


\begin{proof}
Suppose $[a,b]\in G(\beta)$.  It follows from
Theorem~\ref{shifts} that for $y\in L$ with
$v_L(y)=-b-i_0$ we have $v_L(\xi(y))=a$.  Therefore
$f_{\xi}(-b-i_0)=a$.  Suppose $f_{\xi}(-b-i_0+1)=a$.
Then there is $z\in L$ with $v_L(z)=-b-i_0+1$ and
$v_L(\xi(z))=a$.  It follows that there is
$u\in\OO_K^{\times}$ such that
\[v_L(\xi(y+uz))=v_L(\xi(y)+u\xi(z))>a.\]
Since $v_L(y+uz)=-b-i_0$ this contradicts
Theorem~\ref{shifts}.  Hence $f_{\xi}(-b-i_0+1)>a$.

     Suppose $f_{\xi}(-b-i_0)=a$ and
$f_{\xi}(-b-i_0+1)>a$.  Then there is $y_0\in L$ such
that $v_L(y_0)=-b-i_0$ and $v_L(\xi(y_0))=a$.  Let
$y\in L$ satisfy $v_L(y)=-b-i_0$.  Then there are
$u\in\OO_K^{\times}$ and $z\in\M_L^{-b-i_0+1}$ such that
$y=uy_0+z$.  It follows that $v_L(\xi(z))>a$, and hence
that $v_L(\xi(y))=v_L(u\xi(y_0)+\xi(z))=a$.  Since this
holds for all $y\in L$ such that $v_L(y)=-b-i_0$ we get
$[a,b]\in G(\beta)$ by Theorem~\ref{shifts}.
\end{proof}

     The following application of Theorem~\ref{shifts}
gives information about the effect that $\xi$ has on
valuations of arbitrary elements of $L^{\times}$.  For a
related result see Proposition~2.5.2 of \cite{bon2}.

\begin{cor} \label{shiftineq}
Let $\xi\in K[G]\smallsetminus\{0\}$, let
$\beta\in L\otimes_KL$ satisfy $\xi=\phi(\beta)$, and
let $a,b\in\Z$.  Then the following are equivalent:
\begin{enumerate}[(a)]
\item $[a,b]\in D(\beta)$.
\item $f_{\xi}(-b-i_0)\le a$.
\end{enumerate}
\end{cor}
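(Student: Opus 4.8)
The plan is to identify both conditions with the ``staircase'' of the function $f_{\xi}$ and to exploit a periodicity of $f_{\xi}$ coming from the action of $K$ on $L$. First I would record the key preliminary fact that, for a uniformizer $\pi_K$ of $K$ (so that $v_L(\pi_K)=p^n$), the identity $\xi(\pi_K y)=\pi_K\,\xi(y)$ together with $\M_L^{a+p^n}=\pi_K\M_L^a$ gives
\[ f_{\xi}(a+p^n)=f_{\xi}(a)+p^n \qquad\text{for all }a\in\Z. \]
Combined with the monotonicity $f_{\xi}(a+1)\ge f_{\xi}(a)$ already noted, this makes $f_{\xi}$ a non-decreasing step function. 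By Corollary~\ref{fxi} its ``jump points'', i.e.\ the $m$ with $f_{\xi}(m+1)>f_{\xi}(m)$, correspond exactly to the elements of $G(\beta)$ via $m\mapsto[f_{\xi}(m),-m-i_0]$. Throughout I abbreviate $m=-b-i_0$, so that (b) reads $f_{\xi}(m)\le a$, and I use that $D(\beta)$ is the up-closure of $G(\beta)$.

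For (a)$\Rightarrow$(b) I would start from $[a,b]\in D(\beta)$ and choose a minimal element $[a_0,b_0]\in G(\beta)$ with $[a_0,b_0]\le[a,b]$. Writing $m_0=-b_0-i_0$, Corollary~\ref{fxi} gives $a_0=f_{\xi}(m_0)$. Unwinding the order $[a_0,b_0]\le[a,b]$ produces an integer $k$ with $a_0\le a+kp^n$ and $m_0\ge m+kp^n$. The periodicity then gives $f_{\xi}(m+kp^n)=f_{\xi}(m)+kp^n$, and monotonicity together with $m+kp^n\le m_0$ yields $f_{\xi}(m+kp^n)\le f_{\xi}(m_0)=a_0\le a+kp^n$; cancelling $kp^n$ gives $f_{\xi}(m)\le a$, which is~(b).

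For (b)$\Rightarrow$(a), assuming $f_{\xi}(m)\le a$, I would locate the smallest jump point $m^{*}\ge m$. Since $f_{\xi}(m+p^n)=f_{\xi}(m)+p^n>f_{\xi}(m)$, such an $m^{*}$ exists with $m\le m^{*}<m+p^n$, and $f_{\xi}$ is constant on $\{m,\dots,m^{*}\}$, so $f_{\xi}(m^{*})=f_{\xi}(m)\le a$. By Corollary~\ref{fxi} the element $[f_{\xi}(m^{*}),-m^{*}-i_0]$ lies in $G(\beta)\subset D(\beta)$, and testing against the representative $(a,-m-i_0)$ of $[a,b]$ shows $f_{\xi}(m^{*})\le a$ and $-m^{*}-i_0\le-m-i_0$, i.e.\ $[f_{\xi}(m^{*}),-m^{*}-i_0]\le[a,b]$; hence $[a,b]\in D(\beta)$.

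I expect the main obstacle to be organizational rather than conceptual: correctly translating the partial order on $(\Z\times\Z)/H$ into the numerical inequalities $a_0\le a+kp^n$ and $m_0\ge m+kp^n$ while keeping track of the coset shift by $k$, and cleanly establishing the periodicity of $f_{\xi}$ along with the existence of a jump point in every window of length $p^n$. Once these bookkeeping points are settled, monotonicity and Corollary~\ref{fxi} do the rest.
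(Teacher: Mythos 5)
Your proposal is correct and takes essentially the same approach as the paper: both directions reduce to Corollary~\ref{fxi} plus the monotonicity of $f_{\xi}$, and your smallest jump point $m^{*}\ge m$ is precisely the paper's choice of the largest $d\le b$ with $[c,d]\in G(\beta)$, the only differences being that the paper runs the second direction contrapositively while you argue directly, and that you make explicit the periodicity $f_{\xi}(a+p^n)=f_{\xi}(a)+p^n$ which the paper leaves implicit in the shifting of coset representatives. No gaps.
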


\begin{proof}
Suppose $[a,b]\in D(\beta)$.  Then there is
$[c,d]\in G(\beta)$ such that $[c,d]\le[a,b]$.  We may
assume that $c\le a$ and $d\le b$.  It follows from
Corollary~\ref{fxi} that
\[f_{\xi}(-b-i_0)\le f_{\xi}(-d-i_0)=c\le a.\]
Suppose $[a,b]\not\in D(\beta)$.  Let $d$ be the
largest integer such that $d\le b$ and there exists $c$
such that $[c,d]\in G(\beta)$.  By Corollary~\ref{fxi}
we have
\[f_{\xi}(-b-i_0)\le f_{\xi}(-d-i_0)=c.\]
Suppose $f_{\xi}(-b-i_0)<c$.  Then there is $d'$ such
that $d<d'\le b$ and
\[f_{\xi}(-d'-i_0)<f_{\xi}(-d'-i_0+1).\]
Setting $c'=f_{\xi}(-d'-i_0)$ we get
$[c',d']\in G(\beta)$ by Corollary~\ref{fxi}.  This
contradicts the maximality of $d$, so we must have
$f_{\xi}(-b-i_0)=c$.  Furthermore, since
$[c,d]\not\le[a,b]$ we have $a<c$.  Hence
$f_{\xi}(-b-i_0)>a$.
\end{proof}
 
     For $\beta\in L\otimes_KL$ with $\beta\not=0$ set
\[d(\beta)=\min\{i+j:[i,j]\in D(\beta)\}.\]
Define the diagonal of $\beta$ to be
\[N(\beta)=\{[i,j]\in D(\beta):i+j=d(\beta)\}.\]
Then $N(\beta)\subset G(\beta)$.

\begin{definition}
Let $L/K$ be a totally ramified Galois extension of
degree $p^n$.  Say that $L/K$ is a semistable extension
if there is $\beta\in L\otimes_KL$ such that
$\phi(\beta)\in K[G]$, $p\nmid d(\beta)$, and
$|N(\beta)|=2$.  Say that $L/K$ is a stable extension if
$\beta$ can be chosen to satisfy the additional
condition $G(\beta)=N(\beta)$.
\end{definition}

     In \cite[Th.\,4.3.2]{bon2} sufficient conditions
are given for an ideal $\M_L^h$ in a semistable
extension $L/K$ to be free over its associated order.
In some cases where $L/K$ is stable these conditions are
shown to be necessary as well.

     The following congruences imply that for $n\ge2$
most totally ramified Galois extensions of degree $p^n$
are not semistable.

\begin{prop} \label{breaks}
Let $L/K$ be a semistable extension and let
$b_1\le b_2\le\cdots\le b_n$ be the lower ramification
breaks of $L/K$, counted with multiplicity.  Then
$b_i\equiv-i_0\pmod{p^n}$ for $1\le i\le n$.
\end{prop}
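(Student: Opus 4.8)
The plan is to transfer the whole problem from the diagram $D(\beta)$ to the shift profile of the operator $\xi=\phi(\beta)$, and then to read the ramification breaks off that profile. First I would record the periodicity of $f_{\xi}$: since $\xi\in K[G]$ is $K$-linear and $v_L(\pi_K)=p^n$, multiplication by $\pi_K$ carries $\M_L^m$ bijectively onto $\M_L^{m+p^n}$ and commutes with $\xi$, so $f_{\xi}(m+p^n)=f_{\xi}(m)+p^n$. Hence the shift profile $h(m):=f_{\xi}(m)-m$ is periodic of period $p^n$. Using Corollary~\ref{shiftineq} to compute $\min\{i+j:[i,j]\in D(\beta)\}$ one coordinate at a time (for fixed $b$ the least admissible $a$ is $f_{\xi}(-b-i_0)$), I would obtain
\[ d(\beta)=\min_{m\in\Z}h(m)-i_0, \]
and, using Corollary~\ref{fxi}, that the elements of $N(\beta)$ are exactly the cosets $[f_{\xi}(m),-m-i_0]$ as $m$ runs over the minimizers of $h$. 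A short check shows distinct minimizers in one period give distinct cosets and that any minimizer of $h$ is automatically a jump of $f_{\xi}$. Thus semistability says precisely that $h$ has exactly two minimizers per period and that $\min h\not\equiv i_0\pmod p$.

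Next I would reduce the conclusion to a single clean statement. Since $\b(p^n-1)=i_0$ and $p^n-1$ has all base-$p$ digits equal to $p-1$, we have $i_0=(p-1)\sum_{k=0}^{n-1}p^k b_{n-k}$; if all breaks are congruent mod $p^n$ to a common value $B$ this forces $i_0\equiv B(p^n-1)\equiv -B\pmod{p^n}$, so $b_i\equiv B\equiv -i_0$. Conversely $b_i\equiv -i_0$ for all $i$ makes the breaks mutually congruent. Hence it suffices to prove that a semistable extension has all of its ramification breaks congruent modulo $p^n$.

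The heart of the argument is to compute the shift profile $h$ directly from the ramification data. For $\sigma\in G$ the operator $\sigma-\id$ raises the valuation of a generic element by $i_G(\sigma)-1$, and products of such operators raise it by the corresponding sums; more precisely I would show that for any $\xi\in K[G]$ the values of $h$, and the residues mod $p^n$ at which its minimum is attained, are governed by the function $\b$ and the partial order $\preceq$ on $\SS_{p^n}$ exactly as in (\ref{Psis}). In this dictionary a minimizer of $h$ corresponds to an $s\in\SS_{p^n}$ that is minimal for the cost $\b(s)$ subject to the relevant constraints, and the number of minimizers becomes a Lucas-type digit count. Requiring this count to be exactly two while keeping $\min h\not\equiv i_0\pmod p$ then forces a rigid digit pattern, which I expect to translate into $b_i\equiv b_{i'}\pmod{p^n}$ for all $i,i'$. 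Proposition~\ref{prod}, identifying products in $L\otimes_K L$ with Hadamard products of coefficients, is the natural tool for passing to powers $\beta^k$ and making the binomial and Lucas bookkeeping precise.

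The main obstacle is exactly this last dictionary: proving, without assuming a scaffold, that the shift profile of an arbitrary $\xi\in K[G]$ is controlled by $\b$ and $\preceq$, and that ``exactly two minimizers'' is a genuine Lucas-type constraint on the digits. This is where the theory of the indices of inseparability (\cite{heier}) should enter, to tie $f_{\xi}$ and the diagonal $N(\beta)$ back to the breaks $b_i$; the periodicity and the reductions above are routine by comparison.
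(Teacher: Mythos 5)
Your preliminary reductions are correct and well executed: the periodicity $f_{\xi}(m+p^n)=f_{\xi}(m)+p^n$, the identification via Corollaries~\ref{fxi} and~\ref{shiftineq} of $d(\beta)$ with $\min_m\bigl(f_{\xi}(m)-m\bigr)-i_0$ and of $N(\beta)$ with the minimizers of the shift profile, and the observation that, since $i_0=\b(p^n-1)=(p-1)\sum_{k=0}^{n-1}p^kb_{n-k}$, the proposition is equivalent to the statement that all breaks are mutually congruent modulo $p^n$. But the step you yourself flag as ``the main obstacle'' is not a technical remainder to be filled in later --- it is the entire content of the proposition, and the bridge you propose for it does not exist at the stated level of generality. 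The claim that the shift profile of an \emph{arbitrary} $\xi\in K[G]$ is governed by $\b$ and $\preceq$ ``exactly as in (\ref{Psis})'' is false: (\ref{Psis}) is precisely the defining property of a Galois scaffold, a general $\xi$ (for instance a unit multiple of $\id_L$) has a profile bearing no relation to $\b$, and a general element of $K[G]$ is a sum of products of elements $\sigma-\id_L$ in which cancellation can occur, so the generic shifts of monomials do not control $f_{\xi}$. Nor can you repair this within the paper's logical structure by first producing a scaffold from semistability and then reading the profile off (\ref{Psis}): the proof of Theorem~\ref{semi} explicitly invokes Proposition~\ref{breaks}, so that route is circular. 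The concluding assertion that ``exactly two minimizers'' plus $p\nmid d(\beta)$ ``forces a rigid digit pattern, which I expect to translate into $b_i\equiv b_{i'}\pmod{p^n}$'' is a hope, not an argument; nothing in the proposal connects the minimizer structure of $f_{\xi}$ to the breaks $b_i$ without already assuming the dictionary to be proved.

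For comparison, the paper does not attempt any of this: its proof of Proposition~\ref{breaks} is a direct citation of Propositions~3.2.2 and~4.3.2.1 of \cite{bon2}, which contain exactly the analysis you are missing --- Bondarko's study of how the diagonal $N(\beta)$ and the residue of $d(\beta)$ modulo $p^n$ of a semistable element interact with the ramification filtration. So your proposal, if completed, would amount to reproving that part of \cite{bon2} from scratch; the reductions in your first two paragraphs would be a reasonable framing for such a project, but as written the proposal establishes only the easy half and leaves the theorem-bearing step unproved.
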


\begin{proof}
This follows from Propositions~3.2.2 and 4.3.2.1 in
\cite{bon2}.
\end{proof}

     Motivated by the theory of scaffolds, we give a
definition of precision for semistable extensions.

\begin{definition}
Let $L/K$ be a totally ramified Galois extension of
degree $p^n$ and let $\cc\ge1$.  We say that $L/K$ is
semistable with precision $\cc$ if there is
$\beta\in L\otimes_KL$ such that $\phi(\beta)\in K[G]$,
$p\nmid d(\beta)$, $|N(\beta)|=2$, and
$a+b\ge d(\beta)+\cc$ for all
$[a,b]\in G(\beta)\smallsetminus N(\beta)$.
\end{definition}

     If $L/K$ is stable then we may choose $\beta$ so
that $G(\beta)=N(\beta)$.  Hence $L/K$ is semistable
with infinite precision in this case.  On the other
hand, if $L/K$ is semistable with sufficiently high
precision then $L/K$ is stable:

\begin{prop} \label{stable}
Let $L/K$ be a totally ramified Galois extension of
degree $p^n$, and let $h\in\SS_{p^n}$ satisfy
$h\equiv i_0\pmod{p^n}$.  Suppose $L/K$ is semistable
with precision
\[\cc\ge\max\{h-1,p^n-h-1\}.\]
Then $L/K$ is a stable.
\end{prop}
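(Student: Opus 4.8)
The plan is to show that the very tensor $\beta$ witnessing semistability with precision $\cc$ already satisfies $G(\beta)=N(\beta)$, so that no new tensor need be constructed. First I would translate everything into the valuation function $f_\xi$, where $\xi=\phi(\beta)\in K[G]$. By Corollaries~\ref{fxi} and~\ref{shiftineq} the elements of $G(\beta)$ are exactly the jumps of $f_\xi$: a coset $[a,b]$ with $0\le b<p^n$ lies in $G(\beta)$ precisely when $f_\xi(-b-i_0)=a$ and $f_\xi(-b-i_0+1)>a$. Since $\xi$ is $K$-linear and a uniformizer $\pi_K$ of $K$ has $v_L(\pi_K)=p^n$, one gets $f_\xi(m+p^n)=f_\xi(m)+p^n$, so $g(m):=f_\xi(m)-m$ has period $p^n$ and a corner $[a,b]$ has height $a+b=g(-b-i_0)-i_0$. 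Thus $d(\beta)+i_0=\min_m g(m)$, the two elements of $N(\beta)$ are the two points of a period at which this minimum is attained, and a corner of $G(\beta)\smallsetminus N(\beta)$ is a further jump sitting strictly above the minimum. Writing $N(\beta)=\{[a_1,b_1],[a_2,b_2]\}$ with $0\le b_1<b_2<p^n$, set $w=b_2-b_1$, so $a_1-a_2=w$ and $0<w<p^n$.

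The heart of the argument is that under the precision hypothesis such a stray corner cannot be minimal. Consider the up-set $U$ of the two-element antichain $\{[a_1,b_1],[a_2,b_2]\}$ in $(\Z\times\Z)/H$. A direct case analysis on the representatives with $0\le b<p^n$, splitting into $b<b_1$, $b_1\le b<b_2$, and $b_2\le b$ and using the domination criterion together with the wrap-around $[a_k,b_k]=[a_k-p^n,b_k+p^n]$, shows that every coset outside $U$ has height at most $d(\beta)+\max\{w-1,\,p^n-w-1\}-1$. Hence every coset of height at least $d(\beta)+\max\{w-1,\,p^n-w-1\}$ lies in $U$, i.e. dominates one of the two diagonal corners. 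Now a corner $[a_0,b_0]\in G(\beta)\smallsetminus N(\beta)$ has height $a_0+b_0\ge d(\beta)+\cc$ by the definition of precision. If $\cc\ge\max\{w-1,\,p^n-w-1\}$ this forces $[a_0,b_0]\in U$, so $[a_0,b_0]$ dominates a diagonal corner and is not minimal in $D(\beta)$, contradicting $[a_0,b_0]\in G(\beta)$. Therefore $G(\beta)=N(\beta)$ and $\beta$ witnesses stability.

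It remains to match $\max\{w-1,p^n-w-1\}$ with the stated bound $\max\{h-1,p^n-h-1\}$, and identifying this band width is the step I expect to be the main obstacle. By Theorem~\ref{shifts} each diagonal corner records that $\xi$ shifts the valuation $-b_k-i_0$ to $a_k=d(\beta)-b_k$, i.e. by the common amount $d(\beta)+i_0$; the two minima of $g$ thus lie at the residues $-b_1-i_0,\,-b_2-i_0$ modulo $p^n$, a distance $w$ apart. I would argue that these two residues are interchanged by the duality on $L\otimes_K L$ coming from the transposition $a\otimes b\mapsto b\otimes a$, which sends $\phi(\beta)$ to its adjoint and $D(\beta)$ to its reflection across the diagonal, combined with the different, whose exponent is $i_0+p^n-1$. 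Tracking the $i_0$-shift through this duality, and invoking $\b(p^n-1)=i_0$ together with Proposition~\ref{breaks}, should yield $w\equiv\pm i_0\equiv\pm h\pmod{p^n}$, whence $\{w,p^n-w\}=\{h,p^n-h\}$ and $\max\{w-1,p^n-w-1\}=\max\{h-1,p^n-h-1\}\le\cc$. Establishing this congruence rigorously is the delicate point; once it is in hand, the up-set estimate of the second paragraph is routine and the proposition follows.
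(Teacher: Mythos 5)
Your combinatorial half is sound, and it is essentially the paper's own argument: the paper likewise takes the witnessing tensor, supposes a stray corner $[e,f]\in G(\beta)\smallsetminus N(\beta)$ exists, converts non-domination of the two diagonal corners via the criterion $[a,b]\not\le[c,d]$ if and only if $[c+1,d-p^n+1]\le[a,b]$ into the pair of inequalities $[e+1,f-p^n+1]\le[h,0]$ and $[e+1,f-p^n+1]\le[0,h]$, and then runs the same two-case estimate ($e+f\le 2h-2$ when $0\le f\le h-1$, and $e+f\le p^n-2$ when $h\le f\le p^n-1$), contradicting $e+f\ge h+\cc$. Your bound ``stray height at most $d(\beta)+\max\{w-1,p^n-w-1\}-1$'' agrees with this once $w=h$.

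The genuine gap is exactly where you flagged it: identifying the width $w=b_2-b_1$ of $N(\beta)$ with $h$. The paper does not extract this from the diagram formalism at all; it imports two nontrivial results of Bondarko: the proof of Proposition~3.2.1 of \cite{bon2} shows that a semistable witness necessarily has $N(\alpha)=\{[d(\alpha),0],[0,d(\alpha)]\}$ (so one corner sits at $b=0$), and Proposition~4.3.2.1 of \cite{bon2} gives $d(\alpha)\equiv i_0\equiv h\pmod{p^n}$; rescaling by $\pi_K^{-m}$ with $m=(d(\alpha)-h)/p^n$ then places $N(\beta)=\{[h,0],[0,h]\}$, i.e.\ $w=h$ exactly. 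Your proposed substitute cannot deliver this: if $\phi(\beta)=\sum_{\sigma}c_{\sigma}\sigma\in K[G]$ then indeed $\phi(\beta^t)=\sum_{\sigma}c_{\sigma^{-1}}\sigma\in K[G]$ and $D(\beta^t)$ is the reflection of $D(\beta)$ across the diagonal, but the reflected diagonal automatically has width $w$ or $p^n-w$, so this symmetry is tautological and produces no congruence linking $w$ to $i_0$; tracking the different through the trace duality only re-derives the same reflection. Likewise Proposition~\ref{breaks} constrains the ramification breaks $b_i$, not the $b$-coordinates of the corners of an arbitrary witness $\beta$ --- and it itself rests on the very Bondarko propositions in question, so invoking it here is circular as a replacement. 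Without $w=h$, the hypothesis $\cc\ge\max\{h-1,p^n-h-1\}$ does not bound $\max\{w-1,p^n-w-1\}$ (take $h$ near $p^n/2$ against a hypothetical $w$ near $1$ or $p^n-1$), and the argument does not close. The repair is to cite Propositions~3.2.1 and 4.3.2.1 of \cite{bon2} for the location of $N(\beta)$, as the paper does; after that your up-set estimate finishes the proof.
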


\begin{proof}
Since $L/K$ is semistable with precision $\cc$ there is
$\alpha\in L\otimes_KL$ with $\phi(\alpha)\in K[G]$,
$p\nmid d(\alpha)$, $|N(\alpha)|=2$, and
$e+f\ge d(\alpha)+\cc$ for all
$[e,f]\in G(\alpha)\smallsetminus N(\alpha)$.  The proof
of Proposition~3.2.1 in \cite{bon2} shows that
$N(\alpha)=\{[d(\alpha),0],[0,d(\alpha)]\}$.  By
Proposition~4.3.2.1 in \cite{bon2} we have
$d(\alpha)\equiv i_0\equiv h\pmod{p^n}$.  Set
$m=(d(\alpha)-h)/p^n$.  Then $m\in\Z$, so we may define
$\beta=\pi_K^{-m}\alpha$.  We get
$\phi(\beta)=\pi_K^{-m}\phi(\alpha)\in K[G]$ and
$D(\beta)=[-mp^n,0]+D(\alpha)$.  It follows that
$N(\beta)=\{[h,0],[0,h]\}$ and $e+f\ge h+\cc$ for all
$[e,f]\in G(\beta)\smallsetminus N(\beta)$.

     Suppose there is $(e,f)\in\F$ such that
$[e,f]\in G(\beta)\smallsetminus N(\beta)$.  Then
$[h,0]\not\le[e,f]$ and $[0,h]\not\le[e,f]$.  It follows
that
\begin{align} \label{ineq1}
[e+1,f-p^n+1]&\le[h,0] \\
[e+1,f-p^n+1]&\le[0,h]. \label{ineq2}
\end{align}
By (\ref{ineq1}) we have $e+1\le h$.  Hence if
$0\le f\le h-1$ then
\[e+f\le2h-2<2h-1.\]
If $h\le f\le p^n-1$ then by (\ref{ineq2}) we have
$e+1\le0$, and hence
\[e+f\le-1+p^n-1<p^n-1.\]
In either case we get $e+f<h+\cc$, a contradiction.
Hence $G(\beta)\smallsetminus N(\beta)=\varnothing$, so
$L/K$ is stable.
\end{proof}

\section{Semistable extensions and Galois scaffolds}
\label{semscaf}

Let $L/K$ be a totally ramified extension of degree
$p^n$.  In this section we show that $L/K$ is semistable
if and only if $L/K$ has a Galois scaffold.  We also
show that if $L/K$ has a Galois scaffold with
sufficiently high precision then $L/K$ is stable.

\begin{theorem} \label{semi}
Let $L/K$ be a totally ramified Galois extension of
degree $p^n$ which is semistable.  Then $L/K$ admits a
Galois scaffold with precision $\cc=1$.
\end{theorem}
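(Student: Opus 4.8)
The plan is to invoke Proposition~\ref{scaf1}: it suffices to produce a family $\{\lambda_t:t\in\Z\}$ satisfying conditions (i) and (ii) of Definition~\ref{scaffold}, together with elements $\Phi_1,\dots,\Phi_n\in K[G]$ satisfying the valuation conditions (\ref{hyp1}) and (\ref{hyp2}). Condition (iii) and the congruence in (iv) are then supplied automatically by the construction in Proposition~\ref{scaf1}, giving precision $\cc=1$. The $\lambda_t$ are easy: setting $\lambda_{j+mp^n}=\pi_K^m\pi_L^{\,j}$ for $0\le j<p^n$ and $m\in\Z$ gives $v_L(\lambda_t)=t$, and $\lambda_{t_1}\lambda_{t_2}^{-1}=\pi_K^{m_1-m_2}\in K$ whenever $t_1\equiv t_2\pmod{p^n}$. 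So the real content is the construction of the $\Phi_i$ out of the semistable datum.

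First I would record what semistability provides. By hypothesis there is $\beta\in L\otimes_KL$ with $\xi:=\phi(\beta)\in K[G]$, $p\nmid d(\beta)$, and $|N(\beta)|=2$. As recalled in the proof of Proposition~\ref{stable}, Proposition~3.2.1 of \cite{bon2} gives $N(\beta)=\{[d,0],[0,d]\}$ with $d=d(\beta)$, while Proposition~4.3.2.1 of \cite{bon2} gives $d\equiv i_0\pmod{p^n}$; and $b_i\equiv-i_0\pmod{p^n}$ for all $i$ by Proposition~\ref{breaks}. Replacing $\beta$ by $\pi_K^{-m}\beta$ as in Proposition~\ref{stable}, I may normalize $d$ to a convenient representative of its class modulo $p^n$ without disturbing any of these properties.

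Next I would translate the diagram of $\beta$ into precise information about how $\xi$ moves valuations. By Corollary~\ref{shiftineq} one has $[a,b]\in D(\beta)$ exactly when $f_{\xi}(-b-i_0)\le a$, and by Corollary~\ref{fxi} the jumps of the nondecreasing function $f_{\xi}$ are exactly the elements of $G(\beta)$. Since $\xi\in K[G]$ gives $f_{\xi}(t+p^n)=f_{\xi}(t)+p^n$, the shift $t\mapsto f_{\xi}(t)-t$ is periodic modulo $p^n$, with minimum value $d+i_0$ attained at the residues coming from $N(\beta)$. The key structural claim I would establish is that $|N(\beta)|=2$ rigidly determines the \emph{entire} step structure of $f_{\xi}$: namely that the shift $f_{\xi}(t)-t$ is controlled digit by digit in base $p$ by $\a(r(t))$, with the individual breaks $b_i$ entering through $\b$, so that the valuation pattern of $\xi$ matches the combinatorics of $\a$ and $\b$ exactly. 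Granting this, I would read off from $\xi$ and the group-algebra structure, for each $i$, an element $\Phi_i\in K[G]$ whose action raises the valuation of an element of valuation $t$ by exactly $p^{n-i}b_i$ when $\a(r(t))_{(n-i)}\ge1$ and by strictly more when $\a(r(t))_{(n-i)}=0$; Proposition~\ref{prod} is available to analyze coefficientwise any products formed from $\beta$ that are needed here. Verifying (\ref{hyp1}) and (\ref{hyp2}) for these $\Phi_i$ and the $\lambda_t$ above then reduces, via Corollaries~\ref{fxi} and~\ref{shiftineq}, to matching $D(\beta)$ against the values $\b(\a(r(t)))$, after which Proposition~\ref{scaf1} produces the desired scaffold of precision $\cc=1$.

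The main obstacle is exactly the extraction of the $n$ separate operators $\Phi_1,\dots,\Phi_n$, with their digit-gated shifts $p^{n-i}b_i$, from the single element $\xi$. The hypothesis $|N(\beta)|=2$ is a statement about the \emph{global} minimum of the diagram $D(\beta)$, whereas the scaffold conditions (\ref{hyp1})--(\ref{hyp2}) are \emph{local}, level-by-level requirements indexed by the base-$p$ digits of $\a(r(t))$. Bridging these two descriptions—showing that a two-element diagonal forces the full step function $f_{\xi}$, and hence encodes all $n$ ramification breaks simultaneously in a way that can be disentangled into the separate $\Phi_i$—is the crux of the argument; the remaining steps are routine valuation bookkeeping built on Theorem~\ref{shifts} and its corollaries.
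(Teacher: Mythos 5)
Your frame is right: reducing to Proposition~\ref{scaf1}, constructing $\lambda_{j+mp^n}=\pi_K^m\pi_L^j$, and normalizing $\beta$ so that $N(\beta)=\{[h,0],[0,h]\}$ with $h\equiv i_0\pmod{p^n}$ and $b_j\equiv-i_0\pmod{p^n}$ is exactly how the paper begins. But the proposal stops at the decisive step. You name the extraction of the $n$ operators $\Phi_1,\ldots,\Phi_n$ from the single element $\xi=\phi(\beta)$ as the ``main obstacle'' and ``crux'' and then do not carry it out, so what you have is a plan, not a proof. Worse, the bridging claim you propose to establish --- that $|N(\beta)|=2$ rigidly determines the \emph{entire} step structure of $f_{\xi}$ --- is neither proved nor true in the needed generality: semistability constrains only the diagonal $N(\beta)$, and $G(\beta)$ may contain arbitrary further elements above the antidiagonal $i+j=d(\beta)$ (this freedom is precisely what the paper's notion of precision measures, and the final Remark of the paper notes that the constructed scaffold does \emph{not} inherit precision). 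So the single element $\xi$ cannot by itself encode the digit-by-digit conditions (\ref{hyp1})--(\ref{hyp2}).

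The missing idea is multiplicative, and it is where Proposition~\ref{prod} actually earns its keep. Since $\phi$ converts multiplication in $L\otimes_KL$ into coefficientwise multiplication in $L[G]$, every power $\beta^m$ again satisfies $\phi(\beta^m)\in K[G]$; and because $N(\beta)=\{[h,0],[0,h]\}$, the diagonal of a power expands binomially:
\[d(\beta^m)=mh,\qquad
N(\beta^m)=\left\{[sh,(m-s)h]:s\in\SS_{p^n},\ p\nmid\tbinom{m}{s}\right\}.\]
The paper takes $m=p^n-p^{n-i}-1$ and sets $\Theta_i=\phi(\beta^{p^n-p^{n-i}-1})$. Corollary~\ref{shiftineq} then yields $v_L(\Theta_i(\lambda_t))\ge t+mh+i_0$, and Theorem~\ref{shifts} gives equality exactly when $p\nmid\binom{m}{s_i(t)}$ for the residue $s_i(t)$ determined by $t$; by Lucas's theorem this is equivalent to $s_i(t)_{(n-i)}\ne p-1$, which, using $h\equiv i_0\equiv-b_j\pmod{p^n}$, translates into the scaffold condition $\a(r(t))_{(n-i)}\ge1$. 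Finally $\Phi_i=\pi_K^{-v_i}\Theta_i$ with $v_i=\bigl((p^n-p^{n-i}-1)h+i_0-p^{n-i}b_i\bigr)/p^n$ satisfies (\ref{hyp1})--(\ref{hyp2}), and Proposition~\ref{scaf1} finishes. Without this device --- one distinct power of $\beta$ per index $i$, with Lucas's theorem converting nonvanishing binomial coefficients mod $p$ into the digit gates --- your outline has no mechanism for producing the $\Phi_i$, which is the entire content of the theorem.
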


\begin{proof}
Let $h\in\SS_{p^n}$ satisfy $h\equiv i_0\pmod{p^n}$.
Then by Propositions~3.2.1 and 4.3.2.1 of \cite{bon2}
there is $\beta\in L\otimes_KL$ such that
$\phi(\beta)\in K[G]$ and $N(\beta)=\{[0,h],[h,0]\}$.
For $1\le i\le n$ set
$\Theta_i=\phi(\beta^{p^n-p^{n-i}-1})$.  Since
$\phi(\beta)\in K[G]$ it follows from
Proposition~\ref{prod} that $\Theta_i\in K[G]$.  We
clearly have $d(\beta^{p^n-p^{n-i}-1})
=(p^n-p^{n-i}-1)h$ and
\begin{align} \label{N}
N(\beta^{p^n-p^{n-i}-1})
&=\left\{[sh,(p^n-p^{n-i}-1-s)h]:s\in\SS_{p^n},\;
p\nmid\binom{p^n-p^{n-i}-1}{s}\right\}.
\end{align}
Let $\{\lambda_t:t\in\Z\}$ be a set of elements of $L$
such that $v_L(\lambda_t)=t$ and
$\lambda_{t'}\lambda_t^{-1}\in K$ for all $t,t'\in\Z$
with $t'\equiv t\pmod{p^n}$.  It follows from (\ref{N})
and Corollary~\ref{shiftineq} that for $t\in\Z$ we have
\begin{equation} \label{Theta}
v_L(\Theta_i(\lambda_t))\ge t+(p^n-p^{n-i}-1)h+i_0.
\end{equation}
Since $p\nmid h$ there is a unique $s_i(t)\in\SS_{p^n}$
such that
\begin{equation} \label{cond2}
t\equiv-(p^n-p^{n-i}-1-s_i(t))h-i_0\pmod{p^n}.
\end{equation}
By applying Theorem~\ref{shifts} and
Corollary~\ref{shiftineq} to (\ref{N}) we see that
equality holds in (\ref{Theta}) if and only if
$p\nmid\binom{p^n-p^{n-i}-1}{s_i(t)}$.
It follows from Lucas's theorem that this is
equivalent to $s_i(t)_{(n-i)}\not=p-1$, which holds if
and only if $p^{n-i}+s_i(t)<p^n$ and
$(p^{n-i}+s_i(t))_{(n-i)}\ge1$.  

     Since $h\equiv i_0\pmod{p^n}$ it follows from
(\ref{cond2}) that
\[t\equiv(p^{n-i}+s_i(t))h\pmod{p^n}.\]
By Proposition~\ref{breaks} we have
$h\equiv-b_j\pmod{p^n}$ for $1\le j\le n$.  Therefore
\begin{equation} \label{art}
\a(r(t))\equiv p^{n-i}+s_i(t)\pmod{p^n}.
\end{equation}
If we have equality in (\ref{Theta}) then by the
preceding paragraph we get $\a(r(t))=p^{n-i}+s_i(t)$
and $\a(r(t))_{(n-i)}\ge1$.
Conversely, if $\a(r(t))_{(n-i)}\ge1$ then
$\a(r(t))\ge p^{n-i}$, and hence
$p^{n-i}+s_i(t)=\a(r(t))<p^n$ by (\ref{art}).  It
follows from the preceding paragraph that equality holds
in (\ref{Theta}) in this case.  Thus equality holds in
(\ref{Theta}) if and only if $\a(r(t))_{(n-i)}\ge1$.

     For $1\le i\le n$ set
\[v_i=\frac{(p^n-p^{n-i}-1)h+i_0-p^{n-i}b_i}{p^n}.\]
Then $v_i\in\Z$, so we may define
$\Phi_i=\pi_K^{-v_i}\Theta_i\in K[G]$.  Using
(\ref{Theta}) we get
$v_L(\Phi_i(\lambda_t))\ge t+p^{n-i}b_i$, with equality
if and only if $\a(r(t))_{(n-i)}\ge1$.  Hence by
Proposition~\ref{scaf1} there are $\Psi_i\in K[G]$ such
that $(\{\Psi_i\},\{\lambda_t\})$ is a Galois scaffold
for $L/K$ with precision 1.
\end{proof}

\begin{theorem} \label{prec}
Let $L/K$ be a totally ramified Galois extension of
degree $p^n$ which has a Galois scaffold
$(\{\Phi_i\},\{\lambda_t\})$ with precision $\cc\ge1$.
Then $L/K$ is semistable with precision $\cc$.
\end{theorem}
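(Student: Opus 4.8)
The plan is to produce a single $\beta\in L\otimes_KL$ whose diagram has the required two diagonal corners, by taking $\phi(\beta)$ to be an explicit product of the scaffold operators. For $s\in\SS_{p^n}$ write $\Phi^{(s)}=\Phi_n^{s_{(0)}}\Phi_{n-1}^{s_{(1)}}\cdots\Phi_1^{s_{(n-1)}}$; since $(\{\Phi_i\},\{\lambda_t\})$ is a Galois scaffold, the congruence (\ref{Psis}) holds verbatim with $\Psi^{(s)}$ replaced by $\Phi^{(s)}$. I would take $s=p^n-2$, so that $s_{(0)}=p-2$ and $s_{(i)}=p-1$ for $1\le i\le n-1$, and set $\xi=\Phi^{(p^n-2)}$ and $\beta=\phi^{-1}(\xi)$; then $\phi(\beta)=\xi\in K[G]$ by Proposition~\ref{prod}. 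The reason for this choice is twofold. First, $u\succeq s$ holds for exactly the two elements $u\in\{p^n-2,p^n-1\}$, so there are exactly two residues modulo $p^n$, namely those represented by $b_n-i_0$ and $-i_0$, for which $s\preceq\a(r(t))$; I will call such a value of $t$ \emph{active}. Second, $\b(p^n-2)=i_0-b_n$, since lowering the digit in position $0$ from $p-1$ to $p-2$ lowers $\b$ by $p^0b_n=b_n$.

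Next I would record the basic estimate $v_L(\Phi^{(s)}(y))\ge v_L(y)+\b(s)$ for every $y\in L$: expanding $y$ in the $K$-basis $\lambda_0,\dots,\lambda_{p^n-1}$ and applying (\ref{Psis}) term by term, the ultrametric inequality shows each summand of $\xi(y)$ has valuation at least $v_L(y)+\b(s)$. Hence $f_\xi(c)\ge c+\b(s)$ for all $c$, and equality holds precisely when $c$ is active: for active $c$ the element $y=\lambda_c$ already attains $v_L(\xi(\lambda_c))=c+\b(s)$, whereas for inactive $c$ every summand of every $y$ with $v_L(y)\ge c$ has $\xi$-image valuation strictly above $c+\b(s)$ (the active summands sit at valuations $>c$, the inactive ones are pushed up by $\cc$), so $f_\xi(c)>c+\b(s)$. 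Combining this with Corollary~\ref{shiftineq}, every $[a,b]\in D(\beta)$ with $c=-b-i_0$ satisfies $a\ge f_\xi(c)\ge c+\b(s)$, hence $a+b\ge\b(s)-i_0$, with equality exactly at the two active residues. Therefore $d(\beta)=\b(s)-i_0=-b_n$, which is prime to $p$ because $p\nmid b_n$, and the diagonal cosets of $D(\beta)$ are exactly the two attached to the active residues $-i_0$ and $b_n-i_0$, which are distinct modulo $p^n$ since $p\nmid b_n$. Thus $|N(\beta)|=2$.

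It remains to verify the precision condition, which is the crux. Let $[a,b]\in G(\beta)\smallsetminus N(\beta)$ and set $c=-b-i_0$, so that $a=f_\xi(c)$ and $a+b=f_\xi(c)-c-i_0$. Since $[a,b]$ is off the diagonal, $c$ is inactive and $f_\xi(c)>c+\b(s)$; I must show $f_\xi(c)\ge c+\b(s)+\cc$, for then $a+b\ge\b(s)-i_0+\cc=d(\beta)+\cc$. Suppose instead that $f_\xi(c)<c+\b(s)+\cc$. Choose $z\in L$ with $v_L(z)=c$ and $v_L(\xi(z))=f_\xi(c)$, and let $\mu\lambda_{r(c)}$ (with $\mu\in K$) be the leading term in the expansion of $z$. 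Because $r(c)$ is inactive, (\ref{Psis}) gives $v_L(\mu\,\xi(\lambda_{r(c)}))\ge c+\b(s)+\cc>f_\xi(c)$, so removing this term from $z$ yields $z'$ with $v_L(z')\ge c+1$ and $v_L(\xi(z'))=v_L(\xi(z))=f_\xi(c)$. Hence $f_\xi(c+1)\le f_\xi(c)$, and as $f_\xi$ is nondecreasing we get $f_\xi(c+1)=f_\xi(c)$; by Corollary~\ref{fxi} this contradicts $[a,b]\in G(\beta)$. Therefore $f_\xi(c)\ge c+\b(s)+\cc$, and $L/K$ is semistable with precision $\cc$.

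The step I expect to be the main obstacle is this last paragraph: one has to argue that in the off-diagonal case the minimizing element $z$ may be truncated below its leading term without changing $v_L(\xi(z))$, which is exactly where the scaffold precision $\cc$ is consumed through (\ref{Psis}). By contrast, the computation of $d(\beta)$ and the identification of $N(\beta)$ are routine once the estimate $v_L(\Phi^{(s)}(y))\ge v_L(y)+\b(s)$ and its equality case (active versus inactive $c$) are established.
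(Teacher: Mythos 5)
Your proposal is correct and follows essentially the same route as the paper: take $\xi=\Psi^{(p^n-2)}$ (your $\Phi^{(p^n-2)}$), use the scaffold congruence (\ref{Psis}) to pin down $f_{\xi}$ at active versus inactive residues, and read off $d(\beta)=\b(p^n-2)-i_0=-b_n$ and $N(\beta)=\{[-b_n,0],[0,-b_n]\}$ via Corollary~\ref{shiftineq}. Your final leading-term truncation argument (via Corollary~\ref{fxi} and a contradiction) is valid but slightly longer than necessary: since $[e,f]\in G(\beta)$, Theorem~\ref{shifts} says \emph{every} $y$ with $v_L(y)=-f-i_0$ satisfies $v_L(\xi(y))=e$, so taking $y=\lambda_{-f-i_0}$ and applying the inactive case of (\ref{Psis}) gives $e\ge-f-i_0+\b(p^n-2)+\cc$ directly, which is how the paper concludes.
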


\begin{proof}
Set $\xi=\Psi^{(p^n-2)}$.  It follows from (\ref{Psis})
that for every $t\in\Z$ there is
$U_{p^n-2,t}\in\OO_K^{\times}$ such that the following
holds modulo $\lambda_{t+\b(p^n-2)}\M_L^{\cc}$:
\begin{equation} \label{Psipn2}
\xi(\lambda_t)\equiv\begin{cases} 
U_{p^n-2,t}\lambda_{t+\b(p^n-2)}&
\mbox{if }p^n-2\preceq\a(r(t)), \\ 
0&\mbox{if }p^n-2\not\preceq\a(r(t)).
\end{cases}
\end{equation}
Therefore we have $v_L(\xi(\lambda_t))\ge t+\b(p^n-2)$,
with equality if and only if $p^n-2\preceq\a(r(t))$.
Now let $y\in L^{\times}$ be arbitrary and set
$v_L(y)=t$.  It follows from the above that
$v_L(\xi(y))\ge t+\b(p^n-2)$, again with equality if and
only if $p^n-2\preceq\a(r(t))$.  The condition
$p^n-2\preceq\a(r(t))$ is equivalent to
$\a(r(t))\in\{p^n-1,p^n-2\}$, which is valid if and only
if we one of the following congruences holds:
\begin{alignat}{2} \label{cong1}
t&\equiv-\b(p^n-1)&&\pmod{p^n} \\
t&\equiv-\b(p^n-2)&&\pmod{p^n}. \label{cong2}
\end{alignat}
It follows that $f_{\xi}(t)\ge t+\b(p^n-2)$, with
equality if and only if either (\ref{cong1}) or
(\ref{cong2}) holds.  Let $\beta\in L\otimes_KL$ be
such that $\phi(\beta)=\xi$.  By
Corollary~\ref{shiftineq} we have
\[[t+\b(p^n-2)-1,-i_0-t]\not\in D(\beta)\]
for all $t\in\Z$.  Furthermore,
$[t+\b(p^n-2),-i_0-t]\in D(\beta)$ if and only if
either (\ref{cong1}) or (\ref{cong2}) holds.  It follows
that
\[d(\beta)=\b(p^n-2)-i_0=-b_n\]
and $N(\beta)=\{[-b_n,0],[0,-b_n]\}$.  Since
$p\nmid b_n$ this shows that $L/K$ is semistable.

     Suppose $[e,f]\in G(\beta)$ and
$[e,f]\not\in N(\beta)$.  Then
$f\not\equiv0,-b_n\pmod{p^n}$, and hence
\begin{alignat*}{2}
-f-i_0&\not\equiv-\b(p^n-1)&&\pmod{p^n} \\
-f-i_0&\not\equiv-\b(p^n-2)&&\pmod{p^n}.
\end{alignat*}
It follows from Theorem~\ref{shifts} and (\ref{Psipn2})
that
\begin{align*}
e&=v_L(\xi(\lambda_{-f-i_0})) \\
&\ge-f-i_0+\b(p^n-2)+\cc \\
&=-f-b_n+\cc.
\end{align*}
Hence $e+f\ge-b_n+\cc=d(\beta)+\cc$, so $L/K$ is
semistable with precision $\cc$.
\end{proof}

\begin{cor} \label{stab}
Let $L/K$ be a totally ramified Galois extension of
degree $p^n$ and let $h\in\SS_{p^n}$ satisfy
$h\equiv i_0\pmod{p^n}$.  Suppose that $L/K$ has a
Galois scaffold $(\{\Phi_i\},\{\lambda_t\})$ with
precision
\[\cc\ge\max\{h-1,p^n-h-1\}.\]
Then $L/K$ is a stable extension.
\end{cor}

\begin{proof}
This follows from Theorem~\ref{prec} and
Proposition~\ref{stable}.
\end{proof}

\begin{remark}
The proofs of Theorem~\ref{prec} and
Corollary~\ref{stab} don't require part (iii) of
Definition~\ref{scaffold}, which states that $\Psi_i$
lies in the augmentation ideal of $K[G]$.
\end{remark}

\begin{remark}
It would be interesting to get some sort of converse to
Theorem~\ref{prec} or Corollary~\ref{stab}.
Unfortunately, when $L/K$ is semistable with precision
$\cc$ the Galois scaffold for $L/K$ produced by
Theorem~\ref{semi} does not seem to inherit this
precision.  Constructing a Galois scaffold with
precision $\cc>1$ for a stable or semistable extension
$L/K$ would require at least a refinement of
Theorem~\ref{semi}, or perhaps a completely new approach.
\end{remark}

\begin{cor}
Let $L/K$ be a totally ramified Galois extension of
degree $p^n$ and let $G=\Gal(L/K)$.  Then the following
statements are equivalent:
\begin{enumerate}[(a)]
\item $L/K$ admits a Galois scaffold with precision
$\cc$ for some $\cc\ge1$.
\item $L/K$ is semistable.
\item Let $\rho\in L$ satisfy
$v_L(\rho)\equiv-i_0\pmod{p^n}$.  Then for all
$\xi\in K[G]$ and $\lambda\in L^{\times}$ we have
\[v_L(\xi(\rho))-v_L(\rho)\le
v_L(\xi(\lambda))-v_L(\lambda).\]
\end{enumerate}
\end{cor}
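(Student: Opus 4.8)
The plan is to treat the equivalence (a) $\Leftrightarrow$ (b) as already proved: Theorem~\ref{semi} gives (b) $\Rightarrow$ (a) and Theorem~\ref{prec} gives (a) $\Rightarrow$ (b). It then suffices to splice (c) into the cycle, and I would do this by proving (a) $\Rightarrow$ (c) directly from the scaffold and (c) $\Rightarrow$ (b); the remaining implication (b) $\Rightarrow$ (c) is then free, by composing (b) $\Rightarrow$ (a) $\Rightarrow$ (c).

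For (a) $\Rightarrow$ (c), fix a scaffold $(\{\Psi_i\},\{\lambda_t\})$ of precision $\cc\ge1$ and exploit two adapted bases. First, $\{\lambda_0,\ldots,\lambda_{p^n-1}\}$ is a $K$-basis of $L$ since its members have valuations $0,\ldots,p^n-1$, distinct modulo $p^n$; and the $p^n$ products $\Psi^{(s)}$ form a $K$-basis of $K[G]$, as one sees by evaluating at $\lambda_t$ with $t\equiv -i_0$ (so that $\a(r(t))=p^n-1$ and every $s\preceq\a(r(t))$) and reading off distinct valuations from (\ref{Psis}). Writing $\xi=\sum_s c_s\Psi^{(s)}$ with $c_s\in K$ and setting $M=\min\{v_L(c_s)+\b(s):c_s\neq0\}$, equation (\ref{Psis}) gives $v_L(\xi(\lambda_t))\ge t+M$ for every $t$, with equality when $t\equiv -i_0\pmod{p^n}$ (there every term survives with a valuation in its own class mod $p^n$, so no cancellation can occur). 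Expanding an arbitrary $y=\sum_j a_j\lambda_j$ and using $v_L(y)=\min_j(v_L(a_j)+j)$ then yields $v_L(\xi(y))\ge v_L(y)+M$ for all $y\in L^{\times}$; and for $\rho$ with $v_L(\rho)\equiv -i_0$ the single lowest-valuation term dominates and the bound is sharp, so $v_L(\xi(\rho))-v_L(\rho)=M\le v_L(\xi(\lambda))-v_L(\lambda)$, which is exactly (c).

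For (c) $\Rightarrow$ (b) I would first translate (c) into the language of diagrams. Let $\phi(\beta)=\xi\in K[G]\smallsetminus\{0\}$. By Corollary~\ref{shiftineq} and the definition of $d(\beta)$ one has $f_{\xi}(s)-s\ge d(\beta)+i_0$ for all $s$, hence $v_L(\xi(\lambda))-v_L(\lambda)\ge d(\beta)+i_0$ always, with equality attained for a suitable $\lambda$. Thus (c) forces, for every $\rho$ with $v_L(\rho)\equiv -i_0$, the equality $v_L(\xi(\rho))-v_L(\rho)=d(\beta)+i_0$, and via Theorem~\ref{shifts} and Corollary~\ref{fxi} this says precisely that $[d(\beta),0]\in N(\beta)$. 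So (c) is equivalent to the assertion that, for every $\beta$ with $\phi(\beta)\in K[G]$, the diagonal $N(\beta)$ meets the line of cosets $[a,0]$. To manufacture a semistable $\beta$, I would take $\xi$ symmetric under the involution $\sum a_{\sigma}\sigma\mapsto\sum a_{\sigma}\sigma^{-1}$, which corresponds to transposing the diagram $[i,j]\mapsto[j,i]$; then $N(\beta)$ is transposition-invariant, so $[0,d(\beta)]\in N(\beta)$ as well, giving $|N(\beta)|\ge2$ whenever $p\nmid d(\beta)$.

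The hard part will be to pin $|N(\beta)|=2$ exactly and to exhibit a symmetric $\xi$ with $p\nmid d(\beta)$: the reformulation only controls the diagonal point on the line $b\equiv0$, whereas excluding all other diagonal points is a global constraint on the whole diagram. Here I expect to need the multiplicativity of Proposition~\ref{prod} together with the observation that, under (c), every element of valuation $-i_0$ is a normal-basis generator (the map $\xi\mapsto\xi(\rho)$ is then injective, hence bijective, on $K[G]$). This makes $w(\xi):=v_L(\xi(\rho))+i_0$ a valuation-like function satisfying $w(\xi\eta)=w(\xi)+w(\eta)$ whenever $w(\eta)\equiv0\pmod{p^n}$, which should rigidify the diagrams enough---invoking Bondarko's structural results (Propositions~3.2.1 and 4.3.2.1 of \cite{bon2})---to force a two-element diagonal for an appropriate choice of $\xi$, completing (c) $\Rightarrow$ (b).
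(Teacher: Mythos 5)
Your overall architecture is sound, and your direct proof of (a) $\Rightarrow$ (c) is correct and self-contained: evaluating at $\lambda_t$ with $t\equiv-i_0\pmod{p^n}$ (where indeed $\a(r(t))=p^n-1$, so every $s$ satisfies $s\preceq\a(r(t))$) and using that the shifts $\b(s)$ are pairwise incongruent modulo $p^n$ shows both that $\{\Psi^{(s)}:s\in\SS_{p^n}\}$ is a $K$-basis of $K[G]$ and that no cancellation can occur, so the minimal shift $M=\min\{v_L(c_s)+\b(s):c_s\neq0\}$ is attained exactly on elements of valuation $\equiv-i_0\pmod{p^n}$. This is actually more explicit than the paper, which obtains (a) $\Rightarrow$ (c) only by composing Theorem~\ref{prec} with an external result.

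The genuine gap is (c) $\Rightarrow$ (b), which you do not prove. Your reduction of (c) to the assertion that $[d(\beta),0]\in N(\beta)$ for every $\beta$ with $\phi(\beta)\in K[G]\smallsetminus\{0\}$ is fine, and the transposition trick for symmetric $\xi$ (which does correspond to $\beta\mapsto\beta^t$ when $\phi(\beta)\in K[G]$) correctly yields $[0,d(\beta)]\in N(\beta)$ as well, hence $|N(\beta)|\ge2$ when $p\nmid d(\beta)$. But semistability demands $|N(\beta)|=2$ \emph{exactly}, together with the existence of such a $\beta$ with $p\nmid d(\beta)$, and your sketch establishes neither: the ``valuation-like function'' $w$ is never shown to rigidify the diagram, and invoking Propositions~3.2.1 and 4.3.2.1 of \cite{bon2} at this point is circular, since those are statements about extensions already known to be semistable (the paper itself applies them only under a semistability hypothesis, as in Theorem~\ref{semi} and Proposition~\ref{stable}). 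Note moreover that by Proposition~\ref{breaks} most extensions of degree $p^n$ with $n\ge2$ are not semistable, so any proof of (c) $\Rightarrow$ (b) must use (c) quantitatively to exclude all further diagonal points; no soft symmetry argument can succeed. This implication is precisely the content of Theorem~4.4 of \cite{bon2}, which is what the paper cites for the equivalence of (b) and (c); without either that citation or a completed argument, your proof is incomplete at its crucial step.
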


\begin{proof}
The equivalence of (a) and (b) follows from
Theorems~\ref{semi} and \ref{prec}.  The equivalence of
(b) and (c) is proved in Theorem~4.4 of \cite{bon2}.
\end{proof}

     We conclude with some applications of our main
results.

\begin{cor}
Let $L/K$ be a totally ramified Galois extension of
degree $p^n$ which admits a Galois scaffold with
precision $\cc$ for some $\cc\ge1$.  Then the lower
ramification breaks of $L/K$ satisfy
$b_i\equiv-i_0\pmod{p^n}$ for $1\le i\le n$.
\end{cor}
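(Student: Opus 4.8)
The plan is to chain together the two principal results of the preceding section, with essentially no new work required. First I would invoke Theorem~\ref{prec}: by hypothesis $L/K$ admits a Galois scaffold $(\{\Phi_i\},\{\lambda_t\})$ with precision $\cc\ge1$, so the theorem yields that $L/K$ is semistable with precision $\cc$. Discarding the precision bound, we conclude in particular that $L/K$ is a semistable extension in the sense of the definition in Section~3. This is the only reduction step, and it is immediate once Theorem~\ref{prec} is in hand.

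Having placed ourselves in the semistable case, I would then apply Proposition~\ref{breaks} verbatim: it asserts that for any semistable $L/K$ the lower ramification breaks, counted with multiplicity, satisfy $b_i\equiv-i_0\pmod{p^n}$ for $1\le i\le n$. This is exactly the desired conclusion, so the proof terminates here. (One could equally well route the argument through the equivalence of statements (a) and (b) in the preceding corollary, but citing Theorem~\ref{prec} directly is cleaner.)

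There is no genuine obstacle to overcome in this corollary; the substantive content lives entirely in the two results being quoted. The real work in Theorem~\ref{prec} is the construction of $\beta$ with $\phi(\beta)=\Psi^{(p^n-2)}$ and the diagram computation showing $N(\beta)=\{[-b_n,0],[0,-b_n]\}$ with $p\nmid b_n$, while Proposition~\ref{breaks} rests on Bondarko's Propositions~3.2.2 and~4.3.2.1. The only point meriting a word of care is that Theorem~\ref{prec} produces semistability \emph{with} precision, from which plain semistability is obtained by forgetting the bound before feeding the extension into Proposition~\ref{breaks}; no feature of the precision $\cc$ is actually needed for the congruence on the $b_i$.
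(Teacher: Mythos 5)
Your proposal is correct and matches the paper's own proof exactly: the paper likewise applies Theorem~\ref{prec} to conclude that $L/K$ is semistable and then cites Proposition~\ref{breaks} for the congruence $b_i\equiv-i_0\pmod{p^n}$. Your remark that the precision $\cc$ plays no role beyond establishing plain semistability is accurate and consistent with the paper's argument.
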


\begin{proof}
It follows from Theorem~\ref{prec} that $L/K$ is a
semistable extension.  Hence by Proposition~\ref{breaks}
we have $b_i\equiv-i_0\pmod{p^n}$ for all $i$.
\end{proof}

     Let $K$ be a finite unramified extension of $\Q_p$,
with $K\not=\Q_p$.  In \cite[\S5.1]{bon2} a totally
ramified elementary abelian $p$-extension $L/K$ is
constructed which is semistable, but not stable.  The
following shows that there are no examples of elementary
abelian $p$-extensions in characteristic $p$ with these
properties.

\begin{prop}
Let $\ch(K)=p$ and let $L/K$ be a totally ramified
elementary abelian $p$-extension.  If $L/K$ is
semistable then it is stable.
\end{prop}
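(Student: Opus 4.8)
The plan is to reduce stability to a statement about precision and then exploit the equal-characteristic hypothesis to supply that precision. By Corollary~\ref{stab}, if $L/K$ admits a Galois scaffold whose precision $\cc$ satisfies $\cc\ge\max\{h-1,p^n-h-1\}$, where $h\in\SS_{p^n}$ with $h\equiv i_0\pmod{p^n}$, then $L/K$ is stable. Since $L/K$ is semistable, Proposition~\ref{breaks} gives $b_i\equiv-i_0\pmod{p^n}$ for all $i$; in particular $p\nmid i_0$, so $p\nmid b_i$ and the scaffold framework of Section~2 applies. Theorem~\ref{semi} already produces a scaffold of precision $1$, so the entire problem is to \emph{raise the precision}. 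I would aim to construct, using $\ch(K)=p$, a scaffold of infinite precision, which in particular has precision at least $\max\{h-1,p^n-h-1\}$.

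First I would set up data adapted to the elementary abelian structure. Writing $G=\langle\sigma_1,\ldots,\sigma_n\rangle$, the operators $t_i=\sigma_i-1$ satisfy $t_i^{\,p}=(\sigma_i-1)^p=\sigma_i^p-1=0$ because $\ch(K)=p$ and $G$ is abelian of exponent $p$; thus $K[G]\cong K[t_1,\ldots,t_n]/(t_1^p,\ldots,t_n^p)$ is a truncated polynomial ring. Taking an Artin--Schreier system of generators for $L/K$, I would produce a valuation-adapted, $K$-rational family $\{\lambda_t\}$ satisfying Definition~\ref{scaffold}(i),(ii) as well as scaffold operators $\Psi_i$ assembled from the $t_i$. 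The point of equal characteristic is that the action of such operators on the $\lambda_t$ is governed by the \emph{exact} Artin--Schreier relations: the higher-valuation correction terms that appear in $\Psi_i(\lambda_t)$ in the mixed-characteristic computation of Theorem~\ref{semi} come from carries and binomial coefficients that disappear once Frobenius is additive (heuristically, $(a+b)^p=a^p+b^p$). I would verify that, with the $\lambda_t$ chosen compatibly, each congruence in Definition~\ref{scaffold}(iv) becomes an equality, yielding a scaffold of infinite precision.

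Granting this, infinite precision exceeds $\max\{h-1,p^n-h-1\}$, so Corollary~\ref{stab} gives stability and finishes the argument. The main obstacle is precisely the exactness claim of the second step: one must arrange that \emph{every} lower-order term in $\Psi_i(\lambda_t)$ cancels, not merely the leading one, and then propagate the equalities from the generators $\Psi_i$ to the products $\Psi^{(s)}$ through (\ref{Psis}). This is where equal characteristic is indispensable --- the semistable-but-not-stable elementary abelian extension over an unramified extension of $\Q_p$ in \cite[\S5.1]{bon2} shows that in mixed characteristic these corrections genuinely survive, so any valid proof must locate the obstruction in a phenomenon that is special to $\ch(K)=p$. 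As a fallback avoiding scaffolds, one could instead try to show directly, for the semistable $\beta$ with $N(\beta)=\{[0,h],[h,0]\}$, that the Frobenius structure available in characteristic $p$ forces $G(\beta)=N(\beta)$ via Corollaries~\ref{fxi} and \ref{shiftineq}; the same vanishing-of-corrections is the crux there as well.
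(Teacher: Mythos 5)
Your overall skeleton matches the paper's: reduce stability to Corollary~\ref{stab} by producing a scaffold of infinite (hence sufficiently high) precision, starting from the precision-$1$ scaffold that Theorem~\ref{semi} guarantees. But the entire mathematical content of the proposition lies in the precision-raising step, and there you have a genuine gap: you propose to \emph{rebuild} a scaffold from Artin--Schreier generators and "verify that each congruence in Definition~\ref{scaffold}(iv) becomes an equality," while yourself conceding that arranging the cancellation of \emph{every} lower-order term is "the main obstacle." No mechanism is given for this, and none is likely to exist along the route you sketch: the scaffold coming out of Theorem~\ref{semi} is built from powers of the semistable element $\beta$, not from Artin--Schreier data, and for an arbitrary semistable elementary abelian extension there is no reason the naive Artin--Schreier operators satisfy (\ref{hyp1})--(\ref{hyp2}), let alone exact equalities in (iv). Frobenius additivity by itself does not make the correction terms in $\Psi_i(\lambda_t)$ vanish term by term.

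The paper closes this gap with a different, much shorter mechanism, applied one level up. Since $\Psi_i(1)=0$ (condition (iii) of Definition~\ref{scaffold}), each $\Psi_i$ lies in the augmentation ideal of $K[G]$; with $\ch(K)=p$ and $G$ elementary abelian, $K[G]$ is the truncated polynomial ring $K[t_1,\ldots,t_n]/(t_1^p,\ldots,t_n^p)$ --- your own observation --- and Frobenius additivity then gives $\Psi_i^p=0$ \emph{in the group algebra}. The precision upgrade is now supplied by a black-box result, Theorem~A.1(ii) of \cite{bce}: a Galois scaffold with precision $\ge1$ whose operators satisfy $\Psi_i^p=0$ can be replaced by one of infinite precision. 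So your Frobenius heuristic is the right intuition, but it must be applied to the operators $\Psi_i$ themselves rather than to cancellations in their action on the $\lambda_t$; without the citation to \cite{bce} (or an independent proof of that precision-improvement theorem, which is a nontrivial appendix-length argument), your proof does not go through. Your fallback suggestion --- forcing $G(\beta)=N(\beta)$ directly via Corollaries~\ref{fxi} and \ref{shiftineq} --- suffers from the same unlocated "vanishing of corrections" and is likewise not a proof as stated.
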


\begin{proof}
It follows from Theorem~\ref{semi} that $L/K$ has a
Galois scaffold $(\{\Psi_i\},\{\lambda_t\})$ with
precision 1.  Since $\ch(K)=p$ we have $\Psi_i^p=0$ for
$1\le i\le n$.  Hence by Theorem~A.1(ii) of \cite{bce}
the extension $L/K$ has a Galois scaffold with precision
$\infty$.  Therefore by Corollary~\ref{stab} $L/K$ is
stable.
\end{proof}

\end{document}